\newtheorem{remark}{Remark}
\newtheorem{theorem}{Theorem}[section]
\newtheorem{lemma}{Lemma}[section]
\title{Learn bifurcations of nonlinear parametric systems via equation-driven neural networks}
\author{
	Wenrui Hao\\
	Department of Mathematics\\
	Pennsylvania State University\\
	University Park, PA 16802, USA \\
	\texttt{wxh64@psu.edu} \\
	\And
	Chunyue Zheng \\
	Department of Mathematics\\
	Pennsylvania State University\\
	University Park, PA 16802, USA \\
	\texttt{cmz5199@psu.edu} \\
}
\begin{document}
\maketitle

\begin{abstract}
Nonlinear parametric systems have been widely used in modeling nonlinear dynamics in science and engineering. Bifurcation analysis of these nonlinear systems on the parameter space are usually used to study the solution structure such as the number of solutions and the stability.  In this paper, we develop a new machine learning approach to compute the bifurcations via so-called equation-driven neural networks (EDNNs). The EDNNs consist of a two-step optimization: the first step is to approximate the solution function of the parameter by training empirical solution data; the second step is to compute bifurcations by using the approximated neural network obtained in the first step. Both theoretical convergence analysis and numerical implementation on several examples have been performed to demonstrate the feasibility of the proposed method.
\end{abstract}

\keywords{neural networks \and nonlinear parametric systems \and bifurcations }

\section{Introduction}
Systems of nonlinear equations have played important roles in modeling natural phenomena from biology, physics, and materials science \cite{HNS,HHHS}. The solution structures of these nonlinear systems such as
bifurcations and multiple solutions \cite{HHHS,HSZ} are essential to understand  the nonlinear models. More specifically, the relationship between solutions and
parameters is the central question. In order to answer this question,  numerically computing bifurcation often requires large-scale computation, especially for high dimensional parameters. Thus,  efficient numerical algorithms for computing bifurcations of nonlinear parametric systems are keys to exploring
solution configurations, instability, and multiple
solutions \cite{bates2013numerically}. 

There are many methods developed for computing bifurcations of nonlinear systems. One type of these methods is the so-called {\em deflation} technique \cite{leykin2006newton}. It constructs a new augmented nonlinear system to convert the singular solution of the original system to a regular solution of the new system which can be computed by Newton's method.  But the augmented system introduces new variables and new equations and normally doubles the size of the original nonlinear system so it is hard to be applied to large-scale systems. Another direction in this research area is based on {\em homotopy continuation method} \cite{bates2018paramotopy}.  Several adaptive homotopy tracking methods \cite{hao2020adaptive,hao2021adaptive} have been developed to speed up the computation of finding bifurcation points. Some computational packages such as AUTO \cite{doedel1981auto}
and MATCONT \cite{dhooge2003matcont} have also been developed to study parameterized differential equations.  However, the homotopy continuation approach is based on one-dimensional parameter space and becomes inefficient and complicated when the parameter space is high dimensional.

Due to overcoming the curse of dimensionality, neural network techniques have been used to study the solution structure of nonlinear systems \cite{bernal2020machine,mourrain2006determining,huang2001neural,huang2002constrained}. Most of them focus on predicting the number of real solutions of polynomial systems by using feed-forward neural networks and learning the real discriminant locus of parameterized polynomial
equations by the supervised classification. Neural networks have also been applied to approximate any nonlinear continuous functional\cite{chen1993approximations}. Moreover, a universal approximation theorem is proved to show the possibility of neural networks in learning nonlinear operators from data\cite{chen1995universal}. Recently, deep operator networks are also proposed to realize the theorem in practice \cite{lu2019deeponet}. Neural networks have also been developed to solve nonlinear differential equations \cite{raissi2019physics,xu2020finite,gu2021selectnet} by providing a mesh free approach. Moreover, neural networks combine both fitting observation data and calibrating nonlinear differential equations together to solve mathematical models with unknown parameters \cite{kharazmi2021identifiability}. However, if there are singularities in the parameter space, the solutions of nonlinear models can be complex and not unique. In this case, the existing neural network approaches are not applicable anymore. In order to compute singularities/bifurcations of parameter space,
%
our paper focuses on developing an equation-driven neural network (EDNN) to learn the solution path by empirical solution data and further to compute the bifurcation points on the parameter space. 
The rest of the paper is organized as follows: Section 2 provides the formulation of EDNNs and learning algorithms;
Section 3 provides the convergence analysis of the proposed algorithms for computing bifurcation points;
Section 4 applies the proposed approach to several examples;  Finally, a conclusion is provided in Section 5.

\section{EDNN and learning algorithms}
Generally speaking, a nonlinear parametric system is written as $\mathbf{F}:
\mathbb{R}^n\times\mathbb{R}^d\rightarrow\mathbb{R}^n,$
\begin{equation}\label{Sys}
	\mathbf{F}(\mathbf{u},\mathbf{p})=\mathbf{0},
\end{equation}
where $\mathbf{p}$ is a parameter and $\mathbf{u}$ is the variable vector that depends on the parameter $\mathbf{p}$, i.e., $\mathbf{u}=\mathbf{u}(\mathbf{p})$. Suppose we have a solution at the starting point, namely $\mathbf{u}(\mathbf{p}_0)=\mathbf{u}_0$, various homotopy tracking algorithms can be used to compute the solution path, $\mathbf{u}(\mathbf{p})$. If $\mathbf{F}_\mathbf{u}(\mathbf{u},\mathbf{p})$ is nonsingular, the solution path $\mathbf{u}(\mathbf{p})$ is smooth and unique. However, when $\mathbf{F}_\mathbf{u}(\mathbf{u},\mathbf{p})$ becomes singular, the solution path hits the singularity and has different types of bifurcations \cite{bates2013numerically}. Especially for a high dimensional parameter space, computing the singularities is quite challenging. In order to address this challenge, we employ neural networks to approximate the solution on the parameter space. More specifically, fully connected neural networks, consisting of a series of fully connected layers shown in Fig. \ref{Fig:NN}, are functions from the input $p\in\mathbb{R}^d$ to the output $u\in\mathbb{R}^m$.  Then a neural network with $L$ hidden layers can be written as follows
\begin{eqnarray}
	\label{FCNN}
	\mathbf{u}_N(\mathbf{p};\theta) &=& W_Lh_{L-1}+b_L,\nonumber\\ ~h_i&=&\sigma(W_i h_{i-1}+b_i),~ i\in\{1,\cdots, L-1\}, \nonumber\\ \hbox{and }h_0&=&\mathbf{p},
\end{eqnarray} 
where $W_i\in\mathbb{R}^{d_{i}\times d_{i-1}}$ is the weight, $b_i\in\mathbb{R}^{d_{i}}$ is the bias, $d_i$ is the width of $i$-th hidden layer, and $\sigma$ is the	activation function (for example, the rectified linear unit (ReLU) or the sigmoid activation functions \cite{goodfellow2016deep}). By denoting all the parameters of the neural network, namely the weights and bias, as $\theta$, the neural network, $\mathbf{u}_N(\mathbf{p};\theta)$, is then trained by a collection of solution data points $\{\mathbf{p}_i,\mathbf{u}_i\}_{i  = 1}^{K}$ by solving $\mathbf{F}(\mathbf{u}_i,\mathbf{p}_i)=\mathbf{0}$. Then we have
$\displaystyle \theta=\mathop{\arg\min}_{\theta} f_1(\theta)$, where
\begin{eqnarray}
	f_1(\theta)&=&\sum_{i=1}^K  \|{\mathbf{u}}_N(\mathbf{p}_i,\theta) - \mathbf{u}_i(\mathbf{p}_i)\|^2 	+ \frac{\lambda}{2}  \| \mathbf{F}({\mathbf{u}}_N(\mathbf{p}_i,\theta) ,\mathbf{p}_i)\|^2,\quad \quad \label{OPt}
\end{eqnarray}  where
$\mathbf{p}_i$ and $\mathbf{u}_i$ represent data points,  $K$ is the number of data
points along the solution path. In this training process, the EDNN is formed by combining both the solution path fitting and the equation information together.

\begin{figure}[!ht]\centering
	\includegraphics[width=3in]{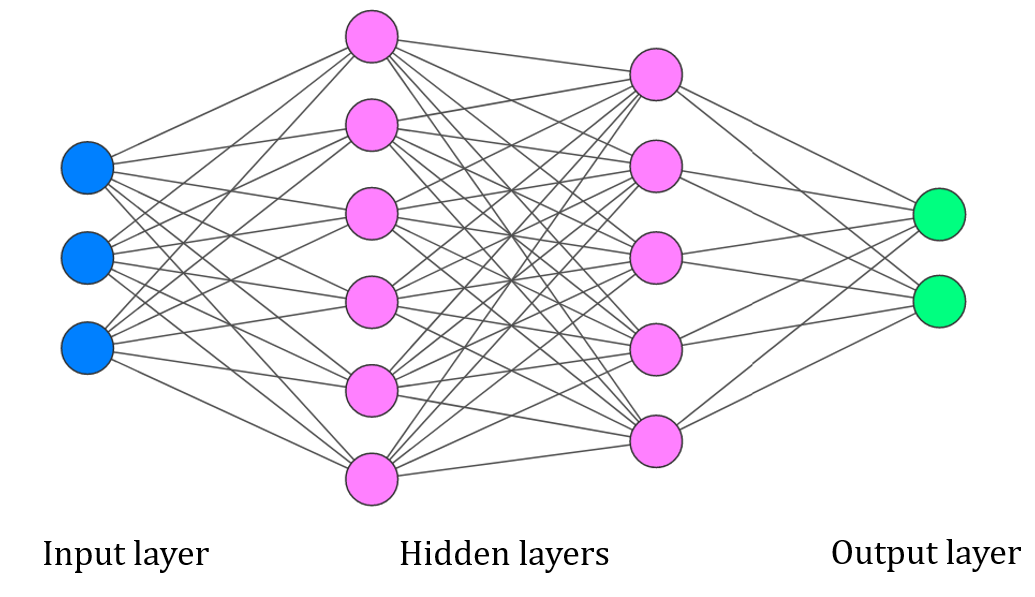}
	\caption{A regular 2-hidden-layer fully connected neural network. }\label{Fig:NN}
\end{figure}

Once the EDNN ${\mathbf{u}}_N(\mathbf{p},\theta)$ is trained, we compute the bifurcation point by solving
$\displaystyle (\mathbf{p}^*,\mathbf{v}^*)=\min_{\mathbf{p},\mathbf{v}}f_2(\mathbf{p},\mathbf{v})$, where

\begin{equation}
	\begin{aligned}
		f_2(\mathbf{p},\mathbf{v}) =	&\frac{\mathbf{v}^T\mathbf{F}^T_\mathbf{u}({\mathbf{u}}_N(\mathbf{p},\theta),\mathbf{p})\mathbf{F}_\mathbf{u}({\mathbf{u}}_N(\mathbf{p},\theta),\mathbf{p})\mathbf{v}}{\mathbf{v}^T\mathbf{v}}\\
		&+ \frac{\lambda}{2} \|\mathbf{F}({\mathbf{u}}_N(\mathbf{p},\theta)),\mathbf{p}) \|^2.
	\end{aligned}
	\label{eqn:optimize-bifur}
\end{equation}
Here $\mathbf{F}^T_\mathbf{u}(\mathbf{u},\mathbf{p})$ is the Jacobian matrix of $\mathbf{F}(\mathbf{u},\mathbf{p})$. Eq. (\ref{eqn:optimize-bifur}) combines both the nonlinear system and the eigenvector $\mathbf{v}$ corresponding to the zero eigenvalue of the Jacobian matrix.

\begin{algorithm}[H]
	\caption{The pseudocode of computing the bifurcation points by using the EDNN.}\label{alg:DNN-Bifur}
	\begin{algorithmic}
		\STATE \textbf{Input: }{The number of epochs $n$ and  solution data on one solution path $\{\mathbf{p}_i,\mathbf{u}_i\}_{i=1}^K$}.
		\STATE \textbf{Output: }{Bifurcations on the solution path $\mathbf{p}^*$}.
		\FOR {$i=1:n$}		
		\STATE Train the EDNN $\mathbf{u}_N(\mathbf{p},\theta)$ by solving the optimization problem in  \eqref{OPt} on each epoch;
		\ENDFOR
		\STATE Solve optimization problem \eqref{eqn:optimize-bifur} to get $\mathbf{p}^*$ with the trained EDNN $\mathbf{u}_N(\mathbf{p},\theta^*)$.
	\end{algorithmic}
\end{algorithm}

\section{Convergence Analysis}
In this section, we discuss the convergence analysis of Algorithm~\ref{alg:DNN-Bifur}. First, we explore the approximation rate of using neural network to approximate the solution path $\mathbf{u}(\mathbf{p})$ by assuming the continuity of $\mathbf{u}(\mathbf{p})$ only.

For a general $E\subset \mathbb{R}^d$ and for any $r\ge0$, the modulus of continuity of $f\in C(E)$ is defined as
\begin{equation}
	\begin{aligned}
		\omega_f^E(r):=\sup\{|f(\mathbf{x}) - f(\mathbf{y})|: &\mathbf{x},\mathbf{y} \in E,\\
		&\|\mathbf{x} - \mathbf{y}\| \le r \}.
	\end{aligned}
\end{equation}
In particular, $\omega_f(\cdot)$ is short of $\omega_f^E(\cdot)$ in the case of $E = [0,1]^d$.  Then an approximation rate by using the ReLU as activation function is established \cite{shen2021optimal}.
\begin{lemma}
	Given any bounded continuous function $f\in C(E)$ with $E\subset [-R,R]^d$ and $R > 0$, for any $N\in \mathbb{N}^+$, $L\in \mathbb{N}^+$, and $p\in[1,\infty]$, there exists a function $\varphi$ implemented by a ReLU network with width $C_1\max\{d\lfloor N^{1/d}\rfloor,N+2\}$ and depth $11L+C_2$ such that
	\begin{equation}
		\begin{aligned}
			&\|f - \varphi\|_{L^p(E)} \\
			\le& 131(2R)^{d/p}\sqrt{d}\omega_f^E(2R(N^2L^2\log_3(N+2))^{-1/d}),
		\end{aligned}
	\end{equation}
	where $C_1=16$ and $C_2 = 18$ if $p\in[1,\infty)$; $C_1 = 3^{d+3}$ and $C_2 = 18+2d$ if $p = \infty$.
	\label{thm:convergence-rate-u}
\end{lemma}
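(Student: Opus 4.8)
The plan is to follow the now-standard ``piecewise-constant approximation plus bit extraction'' strategy for ReLU networks, which is exactly the route taken in \cite{shen2021optimal}; I sketch the main steps. First I would reduce to the unit cube: the affine change of variables $\mathbf{x}\mapsto 2R\mathbf{x}-R\mathbf{1}$ maps $[0,1]^d$ onto $[-R,R]^d\supset E$, is implemented exactly by a single linear layer, and rescales the modulus of continuity via $\omega_g(r)=\omega_f^E(2Rr)$ for the pulled-back function $g$. It therefore suffices to approximate a continuous $g$ on (a subset of) $[0,1]^d$ with error controlled by $\omega_g\big((N^2L^2\log_3(N+2))^{-1/d}\big)$, up to the $\sqrt{d}$ (cube diameter) and $(2R)^{d/p}$ ($L^p$-volume) factors that reappear after undoing the scaling.

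Next I would set up the discretization. Put $M:=N^2L^2\log_3(N+2)$ (up to rounding) and partition $[0,1]^d$ into $\approx M$ congruent subcubes of side $\approx M^{-1/d}$; on each subcube $g$ oscillates by at most $\omega_g(\sqrt d\,M^{-1/d})$, so a function that is \emph{constant on each subcube}, equal to the value of $g$ at a chosen corner, already meets the target accuracy away from a small ``trifling'' neighbourhood of the cube faces. The task is thus twofold: (i) build a ReLU subnetwork that, given $\mathbf{x}$, outputs (an encoding of) the index of the subcube containing it, and (ii) build a ReLU subnetwork that, given an index $m\in\{1,\dots,M\}$, outputs the prescribed value $y_m$ equal to $g$ at the corner of subcube $m$. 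Part (i) is assembled coordinate-wise from the one-dimensional step/quantization function, which ReLU networks implement cheaply, together with the standard fact that a width-$O(N)$, depth-$O(L)$ network realises a sawtooth with $O(N^2L^2)$ linear pieces.

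The real work is part (ii): the ``bit-extraction'' lemma, which says that the map $m\mapsto y_m$ on $M$ prescribed values can be realised (to any accuracy, after binary truncation of the $y_m$) by a ReLU network of width $C_1\max\{d\lfloor N^{1/d}\rfloor,N+2\}$ and depth $11L+C_2$. This is where the \emph{product} $N^2L^2$ (rather than $NL$) in the resolution comes from: one packs the truncated values into the binary digits of a single real number and then peels the digits off using a composition of $O(L)$ shallow blocks, each of width $O(N)$, exploiting that ReLU can exactly compute the ``extract the leading bit'' operation on a bounded interval. Carrying this out with explicit constants — and simultaneously tracking the width/depth budget so that composing (i), (ii) and the final linear read-out layer does not exceed the stated bounds — is the most delicate part of the argument, and the step I would lean most heavily on \cite{shen2021optimal} for.

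Finally I would assemble the error estimate, writing $\|g-\varphi\|_{L^p([0,1]^d)}\le(\text{error on the good region})+(\text{error on the trifling region})$. The first term is at most $\omega_g(\sqrt d\,M^{-1/d})$ plus the bit-truncation error, made negligible by truncating deep enough. For $p<\infty$ the trifling term is bounded by its (small) Lebesgue measure times $\mathrm{osc}(g)$, which can be absorbed; this is why the smaller constants $C_1=16$, $C_2=18$ suffice. For $p=\infty$ one cannot discard the trifling region, so one instead takes a coordinate-wise median over $d$ shifted copies of the grid (each point lies in the good region of at least one shift), which inflates the width by a $3^d$-type factor and adds $2d$ to the depth — hence $C_1=3^{d+3}$, $C_2=18+2d$. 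Undoing the rescaling from the first step reintroduces the $(2R)^{d/p}$ and $\sqrt d$ factors and turns $\omega_g(\cdot)$ back into $\omega_f^E(2R\,\cdot)$, yielding the stated inequality with the explicit constant $131$ after collecting all numerical factors.
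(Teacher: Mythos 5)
The paper offers no proof of this lemma at all---it is imported verbatim from the cited reference \cite{shen2021optimal}---so there is no in-paper argument to compare against. Your sketch accurately reproduces the piecewise-constant-plus-bit-extraction strategy of that reference, including the reduction to the unit cube, the $N^2L^2\log_3(N+2)$ resolution coming from bit extraction, the trifling-region treatment for $p<\infty$, and the shifted-grid fix responsible for the $3^{d+3}$ and $18+2d$ constants when $p=\infty$, which is exactly the route the authors implicitly rely on.
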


Then we generalize the result and obtain the estimate for  $\|\mathbf{u}- \mathbf{u}_N\|_{L^p(E)}$  as follows:
\begin{theorem}
	Given any bounded continuous function $\mathbf{u}:E\to\mathbb{R}^n$ with $E\subset [-R,R]^d$ and $R > 0$, for any $N\in \mathbb{N}^+$, $L\in \mathbb{N}^+$, and $p\in[1,\infty]$, there exists a function $\mathbf{u}_N$ implemented by a ReLU network with width $nC_1\max\{d\lfloor N^{1/d}\rfloor,N+2\}$ and depth $11L+C_2$ such that
	\begin{equation}
		\begin{aligned}
			&\|\mathbf{u}- \mathbf{u}_N\|_{L^p(E)} 
			\le C \omega_{\mathbf{u}}^E(2R(N^2L^2\log_3(N+2))^{-\frac{1}{d}}),
		\end{aligned}
	\end{equation}
	where 	
	$C=131(2R)^{\frac{d}{p}}n^{\frac{1}{p}}\sqrt{d}$
	and
	$	\displaystyle	\omega_{\mathbf{u}}^E(r):=	\max_{i}\omega_{u_i}^E(r) 
	$.
	\label{thm:convergence-rate-u-vector}
\end{theorem}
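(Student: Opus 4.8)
The plan is to reduce the vector‑valued statement to $n$ coordinatewise applications of Lemma~\ref{thm:convergence-rate-u} and then to reassemble the resulting scalar ReLU networks into a single network by parallel composition. First I would fix $N,L\in\mathbb{N}^+$ and $p\in[1,\infty]$, and abbreviate $r:=2R\bigl(N^2L^2\log_3(N+2)\bigr)^{-1/d}$. Writing $\mathbf{u}=(u_1,\dots,u_n)$, each component $u_i\in C(E)$ is bounded and continuous, so Lemma~\ref{thm:convergence-rate-u} furnishes a ReLU network $\varphi_i$ of width $C_1\max\{d\lfloor N^{1/d}\rfloor,N+2\}$ and depth $11L+C_2$ with $\|u_i-\varphi_i\|_{L^p(E)}\le 131(2R)^{d/p}\sqrt{d}\,\omega_{u_i}^E(r)$. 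Since $\omega_{u_i}^E(r)\le \omega_{\mathbf{u}}^E(r)=\max_j\omega_{u_j}^E(r)$ by definition, every coordinate error is bounded by the same quantity $131(2R)^{d/p}\sqrt{d}\,\omega_{\mathbf{u}}^E(r)$.

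Next I would set $\mathbf{u}_N:=(\varphi_1,\dots,\varphi_n)$, realized by stacking the $n$ subnetworks in parallel: all of them read the common input $\mathbf{p}=h_0$, and at each of the $L$ hidden layers one concatenates the hidden units of the $n$ subnetworks (padding a given $\varphi_i$ with inactive units if it happens to use fewer than the maximal width, which the uniform width bound in Lemma~\ref{thm:convergence-rate-u} allows). Because all $\varphi_i$ share the single depth $11L+C_2$, no artificial depth padding is needed, and the result is a genuine feed‑forward ReLU network of depth $11L+C_2$ and width at most $nC_1\max\{d\lfloor N^{1/d}\rfloor,N+2\}$, with $i$‑th output coordinate equal to $\varphi_i$ — exactly the architecture asserted in the theorem.

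Finally I would combine the coordinate estimates, using the (dimension‑consistent) convention that the pointwise norm on $\mathbb{R}^n$ is the $\ell^p$ norm, which is the reading that produces the stated constant. For $p\in[1,\infty)$,
\[
\|\mathbf{u}-\mathbf{u}_N\|_{L^p(E)}^{p}=\int_E\sum_{i=1}^n|u_i-\varphi_i|^p\,d\mathbf{x}=\sum_{i=1}^n\|u_i-\varphi_i\|_{L^p(E)}^{p}\le n\bigl(131(2R)^{d/p}\sqrt{d}\,\omega_{\mathbf{u}}^E(r)\bigr)^{p},
\]
and taking $p$‑th roots yields the bound with $C=131(2R)^{d/p}n^{1/p}\sqrt{d}$. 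For $p=\infty$ one instead takes the pointwise maximum over $i$ of the $L^\infty$ coordinate bounds, giving the same $C$ with $(2R)^{d/p}$ and $n^{1/p}$ both read as $1$.

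The only step I expect to require genuine care is the parallel‑composition argument: one must check that concatenating the per‑coordinate networks really does yield the advertised width $nC_1\max\{d\lfloor N^{1/d}\rfloor,N+2\}$ and depth $11L+C_2$ without inflating the approximation constant, and one must pin down the norm convention on $\mathbb{R}^n$ that makes the $n^{1/p}$ factor appear. Everything else is a direct invocation of Lemma~\ref{thm:convergence-rate-u} together with monotonicity of $\omega$ and elementary additivity of the $L^p$ integral.
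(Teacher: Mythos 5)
Your proposal is correct and follows essentially the same route as the paper's proof: apply Lemma~\ref{thm:convergence-rate-u} componentwise, stack the $n$ scalar networks in parallel to obtain the stated width and depth, and combine the coordinate errors via $\sum_{i=1}^n\|u_i-\varphi_i\|_{L^p(E)}^p\le n\max_i\|u_i-\varphi_i\|_{L^p(E)}^p$ together with $\omega_{u_i}^E\le\omega_{\mathbf{u}}^E$. If anything, you are slightly more careful than the paper, which does not separately treat the $p=\infty$ case or spell out the width-padding in the parallel composition.
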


\begin{proof}
	By denoting $\mathbf{u} = [u_1,u_2,\cdots,u_n]$,	
	by Lemma~\ref{thm:convergence-rate-u},  each element $u_i$ is approximated by a ReLU network $\phi_i$ with width $C_1\max\{d\lfloor N^{1/d}\rfloor,N+2\}$ and depth $11L+C_2$ such that
	\begin{equation}
		\begin{aligned}
			&\|u_i - \phi_i\|_{L^p(E)} \\
			\le& 131(2R)^{\frac{d}{p}}\sqrt{d}\omega_{u_i}^E(2R(N^2L^2\log_3(N+2))^{-\frac{1}{d}}),
		\end{aligned}
	\end{equation}
	Next we construct $\mathbf{u}_N$ by stacking $\phi_i$ vertically and have the width as $nC_1\max\{d\lfloor N^{1/d}\rfloor,N+2\}$ and the depth as $11L+C_2$. Thus we have:
	\begin{equation}
		\begin{aligned}
			&\|\mathbf{u} - \mathbf{u}_N\|_{L^p(E)} = (\int_E\sum_{i=1}^n|u_i - \phi_i|^pd\mu)^{1/p}\\
			\le&(n\max_{i}\|u_i - \phi_i\|^p_{L^p(E)})^{\frac{1}{p}}\\
			\le& 131(2R)^{\frac{d}{p}}n^{\frac{1}{p}}\sqrt{d}\omega_{\mathbf{u}}^E(2R(N^2L^2\log_3(N+2))^{-\frac{1}{d}}).
		\end{aligned}
	\end{equation}
\end{proof}

Now we have the approximation rate of $\mathbf{u}$ by $\mathbf{u}_N$. Next we proceed to obtain the error estimate for the bifurcation approximation and first 
define $f(\mathbf{x},\mathbf{p},\mathbf{v})$ as follows:
\begin{equation}
	\begin{aligned}
		f(\mathbf{u},\mathbf{p},\mathbf{v}) &:= \frac{\mathbf{v}^T\mathbf{F}^T_\mathbf{u}(\mathbf{u},\mathbf{p})\mathbf{F}_\mathbf{u}(\mathbf{u},\mathbf{p})\mathbf{v}}{\mathbf{v}^T\mathbf{v}}+\frac{1}{2}\lambda \|\mathbf{F}(\mathbf{u},\mathbf{p}) \|^2,		
	\end{aligned}
	\label{eqn:loss_function}
\end{equation}
which recovers the minimization problem \eqref{eqn:optimize-bifur} by fixing
$\mathbf{u} = {\mathbf{u}}_N(\mathbf{p},\theta)$. Then we have the following theorem for
$\|\mathbf{p}^*-\mathbf{p}_N^* \|$  in terms of $\|\mathbf{u}^*-\mathbf{u}_N^* \|$.

\begin{theorem}
	Assume that $(\mathbf{u}(\mathbf{p}^*),\mathbf{p}^*,\mathbf{v}^*)$ is the exact solution to the minimization problem (\ref{eqn:loss_function})  with $\|\mathbf{v}^*\| = 1$, and $(\mathbf{u}_N(\mathbf{p}_N^*),\mathbf{p}_N^*,\mathbf{v}_N^*)$ is the solution to the minimization problem (\ref{eqn:optimize-bifur}) with $\|\mathbf{v}_N^*\| = 1$. If $\frac{d \mathbf{F}_\mathbf{u}}{d\mathbf{p}}(\mathbf{u}(\mathbf{p}^*),\mathbf{p}^*)\mathbf{v}^*$ is not in the range space of $\mathbf{F}_\mathbf{u}(\mathbf{u}(\mathbf{p}^*),\mathbf{p}^*)$, then the following estimate for $\|\mathbf{p}^*-\mathbf{p}_N^* \|$ holds:
	\begin{equation}
		\begin{aligned}
			\|\mathbf{p}_N^*-\mathbf{p}^*\|^2\le& C_1f(\mathbf{u}_N(\mathbf{p}_N^*)),\mathbf{p}_N^*,\mathbf{v}_N^*)  + C_2\|\mathbf{u}_N(\mathbf{p}_N^*)-\mathbf{u}(\mathbf{p}_N^*)\|
		\end{aligned}
	\end{equation}
	\label{thm:convergence-rate-p}
\end{theorem}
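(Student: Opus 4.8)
The plan is to connect two facts: the exact triple makes the objective vanish, $f(\mathbf{u}(\mathbf{p}^*),\mathbf{p}^*,\mathbf{v}^*)=0$, since $\mathbf{F}(\mathbf{u}(\mathbf{p}^*),\mathbf{p}^*)=\mathbf{0}$ along the solution branch and $\mathbf{F}_\mathbf{u}(\mathbf{u}(\mathbf{p}^*),\mathbf{p}^*)\mathbf{v}^*=\mathbf{0}$ because $\mathbf{v}^*$ is the null vector; whereas the computed triple makes it small, namely equal to the optimal value $f(\mathbf{u}_N(\mathbf{p}_N^*),\mathbf{p}_N^*,\mathbf{v}_N^*)$ of \eqref{eqn:optimize-bifur}. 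I would bridge these through the surrogate triple $(\mathbf{u}(\mathbf{p}_N^*),\mathbf{p}_N^*,\mathbf{v}_N^*)$, obtained by replacing the network output with the exact branch evaluated at the computed parameter. The proof then has two movements: a \emph{Lipschitz transfer}, bounding $f$ at the surrogate triple by $f$ at the computed triple plus the network error $\|\mathbf{u}_N(\mathbf{p}_N^*)-\mathbf{u}(\mathbf{p}_N^*)\|$; and a \emph{quantitative transversality} estimate, converting smallness of $f$ at the surrogate triple into closeness of $\mathbf{p}_N^*$ to $\mathbf{p}^*$. All of this is local: I assume (as is implicit in the statement) that $\mathbf{F}$ is smooth near $(\mathbf{u}(\mathbf{p}^*),\mathbf{p}^*)$, that $\mathbf{p}_N^*$ lies in the domain of the exact branch, and that the minimizer $(\mathbf{p}_N^*,\mathbf{v}_N^*)$ sits in a fixed neighborhood of $(\mathbf{p}^*,\mathbf{v}^*)$.

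\emph{Lipschitz transfer.} With $\|\mathbf{v}_N^*\|=1$ fixed, $\mathbf{u}\mapsto f(\mathbf{u},\mathbf{p}_N^*,\mathbf{v}_N^*)=\|\mathbf{F}_\mathbf{u}(\mathbf{u},\mathbf{p}_N^*)\mathbf{v}_N^*\|^2+\frac{\lambda}{2}\|\mathbf{F}(\mathbf{u},\mathbf{p}_N^*)\|^2$ is $C^1$, hence Lipschitz on the neighborhood with constant $L$ uniform over the bounded parameters, so $f(\mathbf{u}(\mathbf{p}_N^*),\mathbf{p}_N^*,\mathbf{v}_N^*)\le f(\mathbf{u}_N(\mathbf{p}_N^*),\mathbf{p}_N^*,\mathbf{v}_N^*)+L\|\mathbf{u}_N(\mathbf{p}_N^*)-\mathbf{u}(\mathbf{p}_N^*)\|$. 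Since $\mathbf{F}(\mathbf{u}(\mathbf{p}_N^*),\mathbf{p}_N^*)=\mathbf{0}$ on the branch, the left side equals $\|\mathbf{F}_\mathbf{u}(\mathbf{u}(\mathbf{p}_N^*),\mathbf{p}_N^*)\mathbf{v}_N^*\|^2$, which is therefore bounded by $f(\mathbf{u}_N(\mathbf{p}_N^*),\mathbf{p}_N^*,\mathbf{v}_N^*)+L\|\mathbf{u}_N(\mathbf{p}_N^*)-\mathbf{u}(\mathbf{p}_N^*)\|$.

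\emph{Quantitative transversality.} Let $\mathcal{R}=\mathrm{Range}\,\mathbf{F}_\mathbf{u}(\mathbf{u}(\mathbf{p}^*),\mathbf{p}^*)$, let $\Pi$ be the orthogonal projection onto $\mathcal{R}^\perp$, and set $g(\mathbf{p},\mathbf{v}):=\Pi\,\mathbf{F}_\mathbf{u}(\mathbf{u}(\mathbf{p}),\mathbf{p})\mathbf{v}$, so $|g(\mathbf{p},\mathbf{v})|\le\|\mathbf{F}_\mathbf{u}(\mathbf{u}(\mathbf{p}),\mathbf{p})\mathbf{v}\|$. As $\mathbf{F}_\mathbf{u}(\mathbf{u}(\mathbf{p}^*),\mathbf{p}^*)\mathbf{v}\in\mathcal{R}$ for all $\mathbf{v}$, we get $g(\mathbf{p}^*,\cdot)\equiv0$; hence the Taylor expansion of $g$ about $\mathbf{p}^*$ has every term divisible by $(\mathbf{p}-\mathbf{p}^*)$, and its linearization in $\mathbf{p}$, evaluated at $\mathbf{v}=\mathbf{v}^*$, is the linear map $\mathbf{h}\mapsto\Pi\big(\frac{d\mathbf{F}_\mathbf{u}}{d\mathbf{p}}(\mathbf{u}(\mathbf{p}^*),\mathbf{p}^*)\mathbf{v}^*\big)\mathbf{h}$, which is nonzero \emph{precisely} because $\frac{d\mathbf{F}_\mathbf{u}}{d\mathbf{p}}(\mathbf{u}(\mathbf{p}^*),\mathbf{p}^*)\mathbf{v}^*\notin\mathcal{R}$. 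Bounding the remaining $\mathbf{p}$-quadratic and $\mathbf{p}\,(\mathbf{v}-\mathbf{v}^*)$ cross-terms then yields $|g(\mathbf{p},\mathbf{v})|\ge c\|\mathbf{p}-\mathbf{p}^*\|$ for $(\mathbf{p},\mathbf{v})$ near $(\mathbf{p}^*,\mathbf{v}^*)$ on the unit sphere. To place $\mathbf{v}_N^*$ in that neighborhood, note that once $\mathbf{p}_N^*$ is near $\mathbf{p}^*$ the bound from the first movement gives $\|\mathbf{F}_\mathbf{u}(\mathbf{u}(\mathbf{p}^*),\mathbf{p}^*)\mathbf{v}_N^*\|$ small, and since the kernel there is spanned by $\mathbf{v}^*$ alone, a smallest-nonzero-singular-value estimate forces $\mathbf{v}_N^*$ close to $\pm\mathbf{v}^*$ (the sign is fixed by convention). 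Chaining, $c\|\mathbf{p}_N^*-\mathbf{p}^*\|^2\le|g(\mathbf{p}_N^*,\mathbf{v}_N^*)|^2\le\|\mathbf{F}_\mathbf{u}(\mathbf{u}(\mathbf{p}_N^*),\mathbf{p}_N^*)\mathbf{v}_N^*\|^2\le f(\mathbf{u}_N(\mathbf{p}_N^*),\mathbf{p}_N^*,\mathbf{v}_N^*)+L\|\mathbf{u}_N(\mathbf{p}_N^*)-\mathbf{u}(\mathbf{p}_N^*)\|$, i.e.\ the claim with $C_1=1/c$ and $C_2=L/c$. (Equivalently, one can package the transversality via the Hessian of $(\mathbf{p},\mathbf{v})\mapsto f(\mathbf{u}(\mathbf{p}),\mathbf{p},\mathbf{v})$ at $(\mathbf{p}^*,\mathbf{v}^*)$, positive definite in $\mathbf{v}$ on the orthogonal complement of $\mathbf{v}^*$ by corank one and in $\mathbf{p}$ by the same condition.)

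I expect the transversality step to be the main obstacle. Making $|g|\ge c\|\mathbf{p}-\mathbf{p}^*\|$ uniform forces one to absorb the $\mathbf{p}\,(\mathbf{v}-\mathbf{v}^*)$ cross-term, which needs $\mathbf{v}_N^*\approx\mathbf{v}^*$, which in turn relies on the corank-one structure of $\mathbf{F}_\mathbf{u}$ at the bifurcation point and on the locality assumption for the minimizer (justifiable by viewing $f_2$ as a small perturbation of $(\mathbf{p},\mathbf{v})\mapsto f(\mathbf{u}(\mathbf{p}),\mathbf{p},\mathbf{v})$, but somewhat delicate to state cleanly). A secondary subtlety: for $d>1$ the bifurcation set is generically $(d-1)$-dimensional, so the single condition $\frac{d\mathbf{F}_\mathbf{u}}{d\mathbf{p}}\mathbf{v}^*\notin\mathcal{R}$ controls only one direction, and the left-hand norm should then be read as the distance from $\mathbf{p}_N^*$ to the bifurcation manifold unless complementary conditions are imposed. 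The Lipschitz transfer, by contrast, is routine.
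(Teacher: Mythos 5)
Your proposal is correct in substance, and its first movement coincides with the paper's: the paper also perturbs the first argument of $f$, writing $f(\mathbf{u}_N(\mathbf{p}_N^*),\mathbf{p}_N^*,\mathbf{v}_N^*) = h(\mathbf{p}_N^*,\mathbf{v}_N^*) + f_{\mathbf{u}}(\xi_1,\mathbf{p}_N^*,\mathbf{v}_N^*)(\mathbf{u}_N(\mathbf{p}_N^*)-\mathbf{u}(\mathbf{p}_N^*))$ with $h(\mathbf{p},\mathbf{v}):=f(\mathbf{u}(\mathbf{p}),\mathbf{p},\mathbf{v})$, which is exactly your Lipschitz transfer with $L=\sup\|f_{\mathbf{u}}\|$. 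Where you diverge is the second movement. The paper Taylor-expands $h$ to second order about $(\mathbf{p}^*,\mathbf{v}^*)$, computes the Hessian blocks, identifies the quadratic form as $2\|\mathbf{F}_\mathbf{u}\mathbf{y}+\tfrac{d\mathbf{F}_\mathbf{u}}{d\mathbf{p}}\mathbf{v}^*\mathbf{x}\|^2$, and invokes the range condition to claim positive definiteness; you instead run a first-order transversality argument on the projected quantity $g=\Pi\mathbf{F}_\mathbf{u}(\mathbf{u}(\mathbf{p}),\mathbf{p})\mathbf{v}$ and lower-bound $f$ at the surrogate triple by $|g|^2$. These are close cousins (your parenthetical correctly identifies the Hessian packaging as equivalent), but your route buys something real: the paper's positive-definiteness claim is, strictly speaking, false, since $h$ is homogeneous of degree zero in $\mathbf{v}$ and the paper's own quadratic form vanishes at $(\mathbf{x},\mathbf{y})=(\mathbf{0},\mathbf{v}^*)$ because $\mathbf{v}^*\in\ker\mathbf{F}_\mathbf{u}$; coercivity holds only transverse to that radial direction. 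Your argument never needs coercivity in $\mathbf{v}$ — you pin $\mathbf{v}_N^*$ near $\pm\mathbf{v}^*$ by a smallest-singular-value/corank-one argument and absorb the cross-term — and it even flags the sign ambiguity, which would break the paper's Taylor expansion outright if $\mathbf{v}_N^*\approx-\mathbf{v}^*$ places the intermediate point $\mathbf{v}_\xi$ near the origin where $h$ is undefined. Your two closing caveats are also exactly the ones the paper assumes silently: locality of the computed minimizer, and the reading of the hypothesis as injectivity of $\mathbf{x}\mapsto\Pi\bigl(\tfrac{d\mathbf{F}_\mathbf{u}}{d\mathbf{p}}\mathbf{v}^*\bigr)\mathbf{x}$ for $d>1$ rather than mere nonvanishing — the paper's bordered-matrix nonsingularity is precisely that injectivity. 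The only cost of your route is that $C_1,C_2$ are less explicit than a reciprocal smallest eigenvalue of the (restricted) Hessian, which is cosmetic.
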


\begin{proof}
	By denoting
	\begin{equation}
		h(\mathbf{p},\mathbf{v}) := f(\mathbf{u}(\mathbf{p}),\mathbf{p},\mathbf{v}),
	\end{equation}
	we have 
	\begin{equation}
		h(\mathbf{p}^*,\mathbf{v}^*) = 0, \quad \nabla_{\mathbf{p},\mathbf{v}} h(\mathbf{p}^*,\mathbf{v}^*) = \mathbf{0}.
	\end{equation}
	By Taylor expansion, we obtain
	
	\begin{equation}
		\begin{aligned}
			&f(\mathbf{u}_N(\mathbf{p}_N^*),\mathbf{p}_N^*,\mathbf{v}_N^*) \\
			=&f(\mathbf{u}_N(\mathbf{p}_N^*)-\mathbf{u}(\mathbf{p}_N^*)+\mathbf{u}(\mathbf{p}_N^*),\mathbf{p}_N^*,\mathbf{v}_N^*) \\
			=&f(\mathbf{u}(\mathbf{p}_N^*),\mathbf{p}_N^*,\mathbf{v}_N^*) +  f_{\mathbf{u}}(\xi_1,\mathbf{p}_N^*,\mathbf{v}_N^*)(\mathbf{u}_N(\mathbf{p}_N^*)-\mathbf{u}(\mathbf{p}_N^*))\\
			=&h(\mathbf{p}_N^*,\mathbf{v}_N^*) +  f_{\mathbf{u}}(\xi_1,\mathbf{p}_N^*,\mathbf{v}_N^*)(\mathbf{u}_N(\mathbf{p}_N^*)-\mathbf{u}(\mathbf{p}_N^*)),
		\end{aligned}
	\end{equation}
	where $\xi_1 = \mathbf{u}(\mathbf{p}_N^*) + \theta_1(\mathbf{u}_N(\mathbf{p}_N^*)-\mathbf{u}(\mathbf{p}_N^*))$ with some $\theta_1\in[0,1]$.
	By denoting		 $(\mathbf{p}_\xi,\mathbf{v}_\xi) = (\mathbf{p}^*,\mathbf{v}^*) + \theta_2(\mathbf{p}_N^*-\mathbf{p}^*,\mathbf{v}_N^*-\mathbf{v}^*)$ with $\theta_2\in[0,1]$, and $\Delta U = [\mathbf{p}_N^*-\mathbf{p}^*;\mathbf{v}_N^*-\mathbf{v}^*]$, we derive
	
	\begin{equation}
		\begin{aligned}
			&f(\mathbf{u}_N(\mathbf{p}_N^*),\mathbf{p}_N^*,\mathbf{v}_N^*) \\
			=& h(\mathbf{p}^*,\mathbf{v}^*)+   \nabla_{\mathbf{p},\mathbf{v}} h(\mathbf{p}^*,\mathbf{v}^*)[\mathbf{p}_N^*-\mathbf{p}^*;\mathbf{v}_N^*-\mathbf{v}^*]\\
			&+\frac{1}{2}\Delta U ^T\nabla_{\mathbf{p},\mathbf{v}}^2 h(\mathbf{p}_\xi,\mathbf{v}_\xi)\Delta U 
			+    f_{\mathbf{u}}(\xi_1,\mathbf{p}_N^*,\mathbf{v}_N^*)(\mathbf{u}_N(\mathbf{p}_N^*)-\mathbf{u}(\mathbf{p}_N^*))\\
			=&\frac{1}{2}\Delta U ^T\nabla_{\mathbf{p},\mathbf{v}}^2 h(\mathbf{p}_\xi,\mathbf{v}_\xi)\Delta U +   f_{\mathbf{u}}(\xi_1,\mathbf{p}_N^*,\mathbf{v}_N^*)(\mathbf{u}_N(\mathbf{p}_N^*)-\mathbf{u}(\mathbf{p}_N^*)),
		\end{aligned}
	\end{equation}
	which implies 
	\begin{equation}
		\begin{aligned}
			&\|\frac{1}{2}\Delta U ^T\nabla_{\mathbf{p},\mathbf{v}}^2 h(\mathbf{p}_\xi,\mathbf{v}_\xi)\Delta U \|\\
			=& \|f(\mathbf{u}_N(\mathbf{p}_N^*),\mathbf{p}_N^*,\mathbf{v}_N^*)  - f_{\mathbf{u}}(\xi_1,\mathbf{p}_N^*,\mathbf{v}_N^*)(\mathbf{u}_N(\mathbf{p}_N^*)-\mathbf{u}(\mathbf{p}_N^*))\|\\
			\le & \|f_{\mathbf{u}}(\xi_1,\mathbf{p}^*,\mathbf{v}_N^*)\|\|\mathbf{u}_N(\mathbf{p}_N^*)-\mathbf{u}(\mathbf{p}_N^*)\|+f(\mathbf{u}_N(\mathbf{p}_N^*),\mathbf{p}_N^*,\mathbf{v}_N^*) \\
			\le &f(\mathbf{u}_N(\mathbf{p}_N^*),\mathbf{p}_N^*,\mathbf{v}_N^*)  + C_0\|\mathbf{u}_N(\mathbf{p}_N^*)-\mathbf{u}(\mathbf{p}_N^*)\|.
		\end{aligned}
		\label{eqn:get-bound}
	\end{equation}
	If $\nabla_{\mathbf{p},\mathbf{v}}^2h(\mathbf{p}^*,\mathbf{v}^*)$ is positive definite, then we are able to get the following estimate on $\|\mathbf{p}_N^*-\mathbf{p}^*\|$ based on \eqref{eqn:get-bound}:
	\begin{equation}
		\begin{aligned}
			&C\|[\mathbf{p}_N^*-\mathbf{p}^*;\mathbf{v}_N^*-\mathbf{v}^*]\|^2 	\le &f(\mathbf{u}_N(\mathbf{p}_N^*),\mathbf{p}_N^*,\mathbf{v}_N^*)  + C_0\|\mathbf{u}_N(\mathbf{p}_N^*)-\mathbf{u}(\mathbf{p}_N^*)\|,\\
		\end{aligned}
	\end{equation}
	which is equivalent to	
	\begin{equation}
		\begin{aligned}
			\|\mathbf{p}_N^*-\mathbf{p}^*\|^2\le &C_1 f(\mathbf{u}_N(\mathbf{p}_N^*),\mathbf{p}_N^*,\mathbf{v}_N^*) + C_2\|\mathbf{u}_N(\mathbf{p}_N^*)-\mathbf{u}(\mathbf{p}_N^*)\|.
		\end{aligned}
	\end{equation}

	Now we proceed to prove $\nabla_{\mathbf{p},\mathbf{v}}^2h(\mathbf{p}^*,\mathbf{v}^*)$ is positive definite.
	By denoting $A=\mathbf{F}^T_\mathbf{u}(\mathbf{u},\mathbf{p})\mathbf{F}_\mathbf{u}(\mathbf{u},\mathbf{p})$ in \eqref{eqn:loss_function}, we get
	%
	\begin{equation}
		\begin{aligned}
			\frac{\partial h}{\partial \mathbf{p}_k}=&  
			\frac{\mathbf{v}^T}{\|\mathbf{v}\|} \frac{dA}{d\mathbf{p}_k}\frac{\mathbf{v}}{\|\mathbf{v}\|}+\lambda\sum_{i} \mathbf{F}_{i}\frac{d\mathbf{F}_{i}}{d \mathbf{p}_k}
		\end{aligned}
	\end{equation}
	and
	\begin{equation}
		\begin{aligned}
			\frac{\partial h}{\partial \mathbf{v}_k}
			&=\frac{(2\sum_{i}A_{ik}\mathbf{v}_i)(\sum_{i} \mathbf{v}^2_i)-2\mathbf{v}_k \mathbf{v}^TA\mathbf{v} }{(\sum_{i} \mathbf{v}^2_i)^2}.
		\end{aligned}
	\end{equation}
	
	Hence
	\begin{equation}
		\begin{aligned}
			&\frac{\partial^2 h}{\partial \mathbf{p}_k\partial \mathbf{p}_r}(\mathbf{p}^*,\mathbf{v}^*)\\
			=&\mathbf{v}^{*T}\frac{d^2 A}{d\mathbf{p}_k d\mathbf{p}_r}\mathbf{v}^* +\lambda\sum_{i} \frac{d \mathbf{F}_{i}}{d \mathbf{p}_r}\frac{d \mathbf{F}_{i}}{d \mathbf{p}_k }+\lambda\sum_{i}  \mathbf{F}_{i}\frac{d^2 \mathbf{F}_{i}}{d \mathbf{p}_k d \mathbf{p}_r }\\
			=&\mathbf{v}^{*T} ( \frac{d \mathbf{F}^T_\mathbf{u}}{d\mathbf{p}_k}\frac{d \mathbf{F}_\mathbf{u}}{d\mathbf{p}_r}+\frac{d \mathbf{F}^T_\mathbf{u}}{d\mathbf{p}_r}\frac{d \mathbf{F}_\mathbf{u}}{d\mathbf{p}_k})\mathbf{v}^*
		\end{aligned}
	\end{equation}
	and
	\begin{equation}
		\begin{aligned}
			&\frac{\partial^2 h}{\partial \mathbf{v}_k\partial \mathbf{p}_r}(\mathbf{p}^*,\mathbf{v}^*) \\
			= &  \frac{(2\sum_{i}\frac{\partial A_{ik}}{\partial \mathbf{p}_r}\mathbf{v}_i)(\sum_{i} \mathbf{v}^2_i)-2\mathbf{v}_k \mathbf{v}^T\frac{\partial A}{\partial \mathbf{p}_r}\mathbf{v} }{(\sum_{i} \mathbf{v}^2_i)^2}\\
			=&2[\frac{d}{d\mathbf{p}_r} (\mathbf{F}^T_\mathbf{u}\mathbf{F}_\mathbf{u})]_{k,:}\mathbf{v}^*\frac{\partial^2 h}{\partial \mathbf{v}_k\partial \mathbf{v}_r}(\mathbf{p}^*,\mathbf{v}^*)
			=2[\mathbf{F}^T_\mathbf{u}\mathbf{F}_\mathbf{u}]_{k,r}.
		\end{aligned}
	\end{equation}

	Thus for $\mathbf{x}\in\mathbb{R}^d$ and $\mathbf{y}\in\mathbb{R}^n$, let $\mathbf{U} = [\mathbf{x};\mathbf{y}]$
	\begin{equation}
		\begin{aligned}
			&\mathbf{U} ^T\nabla^2 h(\mathbf{p}^*,\mathbf{v}^*)\mathbf{U} \\
			=&[\mathbf{x}^T\  \mathbf{y}^T]\begin{bmatrix}
				2\mathbf{v}^{*T}(\frac{d \mathbf{F}^T_\mathbf{u}}{d\mathbf{p}}\frac{d \mathbf{F}_\mathbf{u}}{d\mathbf{p}})\mathbf{v}^* &2\mathbf{v}^{*T}\frac{d \mathbf{F}^T_\mathbf{u}}{d\mathbf{p}}\mathbf{F}_\mathbf{u} \\
				2\mathbf{F}^T_\mathbf{u}\frac{d \mathbf{F}_\mathbf{u}}{d\mathbf{p}} \mathbf{v}^*&2\mathbf{F}_\mathbf{u}^T\mathbf{F}_\mathbf{u}
			\end{bmatrix}\begin{bmatrix}
				\mathbf{x}\\
				\mathbf{y}
			\end{bmatrix}\\
			=&2\mathbf{x}^T	\mathbf{v}^{*T}(\frac{d \mathbf{F}^T_\mathbf{u}}{d\mathbf{p}}\frac{d \mathbf{F}_\mathbf{u}}{d\mathbf{p}})\mathbf{v}^* \mathbf{x}
			+4\mathbf{y}^T\mathbf{F}^T_\mathbf{u}\frac{d \mathbf{F}_\mathbf{u}}{d\mathbf{p}} \mathbf{v}^*\mathbf{x} \\
			&+2\mathbf{y}^T\mathbf{F}_\mathbf{u}^T\mathbf{F}_\mathbf{u}\mathbf{y}
		\end{aligned}
		\label{eqn:positive_definite}
	\end{equation}
	
	where $	\mathbf{v}^{*T}(\frac{d \mathbf{F}^T_\mathbf{u}}{d\mathbf{p}}\frac{d \mathbf{F}_\mathbf{u}}{d\mathbf{p}})\mathbf{v}^* \in \mathbb{R}^{d\times d}$ is defined as
	$$
	(\mathbf{v}^{*T}(\frac{d \mathbf{F}^T_\mathbf{u}}{d\mathbf{p}}\frac{d \mathbf{F}_\mathbf{u}}{d\mathbf{p}})\mathbf{v}^* )_{ij} = \mathbf{v}^{*T}(\frac{d \mathbf{F}^T_\mathbf{u}}{d\mathbf{p}_i}\frac{d \mathbf{F}_\mathbf{u}}{d\mathbf{p}_j})\mathbf{v}^*
	$$
	and $(\frac{d \mathbf{F}_\mathbf{u}}{d\mathbf{p}})_i = \frac{d \mathbf{F}_\mathbf{u}}{d\mathbf{p}_i} \in \mathbb{R}^{n\times n}$.
	
	It is equivalent to prove the following matrix is non-singular
	\begin{equation}
		\begin{bmatrix}
			\mathbf{F}_\mathbf{u} & \frac{d \mathbf{F}_\mathbf{u}}{d\mathbf{p}}\mathbf{v}^*\\
			\mathbf{v}^{*T}&0
		\end{bmatrix}	\begin{bmatrix}
			\mathbf{a}\\
			\mathbf{b}
		\end{bmatrix} = \begin{bmatrix}
			0\\
			0
		\end{bmatrix}
	\end{equation}
	which is to say, if $\frac{d \mathbf{F}_\mathbf{u}}{d\mathbf{p}}(\mathbf{u}(\mathbf{p}^*),\mathbf{p}^*)\mathbf{v}^*$ is not in the range space of $\mathbf{F}_\mathbf{u}(\mathbf{u}(\mathbf{p}^*),\mathbf{p}^*)$. Thus by this assumption, we have $\nabla^2h(\mathbf{p}^*,\mathbf{v}^*)$ be positive definite.
\end{proof}

\begin{remark}
	Assuming $\mathbf{p}\subset [-R,R]^d$, an immediate consequence of Theorem~\ref{thm:convergence-rate-u-vector} is the following estimate:
	\begin{equation}
		\begin{aligned}
			&\|\mathbf{p}_N^*-\mathbf{p}^*\|^2\le C_1f(\mathbf{u}_N(\mathbf{p}_N^*)),\mathbf{p}_N^*,\mathbf{v}_N^*)  \\
			&+ C_2\sqrt{d}\omega_{\mathbf{u}}^E(2R(N^2L^2\log_3(N+2))^{-1/d})			
		\end{aligned}
	\end{equation}
	with a ReLU neural network with $\mathcal{O}(L)$ depth and $\mathcal{O}(N)$ width.
\end{remark}

\section{Numerical results}

In this section, we apply Algorithm~\ref{alg:DNN-Bifur} to approximate bifurcations for different nonlinear parametric systems. We use pytorch to implement the proposed numerical algorithms and use {\it torch.mean} to calculate the loss function, which is different with \eqref{OPt} and \eqref{eqn:optimize-bifur} by a constant multiplier.

\subsection{One-dimensional example}\label{sec:dnn-bifur-ex1}
First, we apply the EDNN to the following one dimensional parametric problem
\begin{equation}
	F(x,p) =	x^2 - p = 0,
	\label{eqn:ex1}
\end{equation}
where the bifurcation point $p^* = 0$ is a turning point. We generate the training data by  $\{(p_i,\sqrt{p_i})\}_{i=1}^{K = 1600}$  and randomly choose $p_i\in[0,2]$ with the uniform distribution. By choosing different number of hidden units for the one-hidden-layer EDNN, we  summarize the absolute errors of the EDNN approximation in Table~\ref{table:ex1} which confirms the numerical convergence as $N$ increases. Fig~\ref{fig:ex1loss1} shows the natural log of the loss v.s. epoches for both the training step and the approximating step which achieve $10^{-5}$ and $10^{-3}$ respectively.

\begin{table}[ht]
	\caption{Numerical errors of  approximating bifurcation by one-hidden-layer EDNNs with different width, $N$, for Eq. \eqref{eqn:ex1}.}
	\begin{center}
		\begin{tabular}{|c|c|}
			\hline
			Width & $|p^*-p^*_N|$  \\
			\hline
			N = 20 & 0.0970  \\
			\hline
			N = 40 &0.0528   \\
			\hline
			N = 80 &0.0237  \\
			\hline
			N = 160 &0.0153\\
			\hline
			N = 320 & 0.0100 \\
			\hline
		\end{tabular}
	\end{center}
	\label{table:ex1}
\end{table}

\begin{figure}[th]
	\centering
	\includegraphics[width=0.45\linewidth]{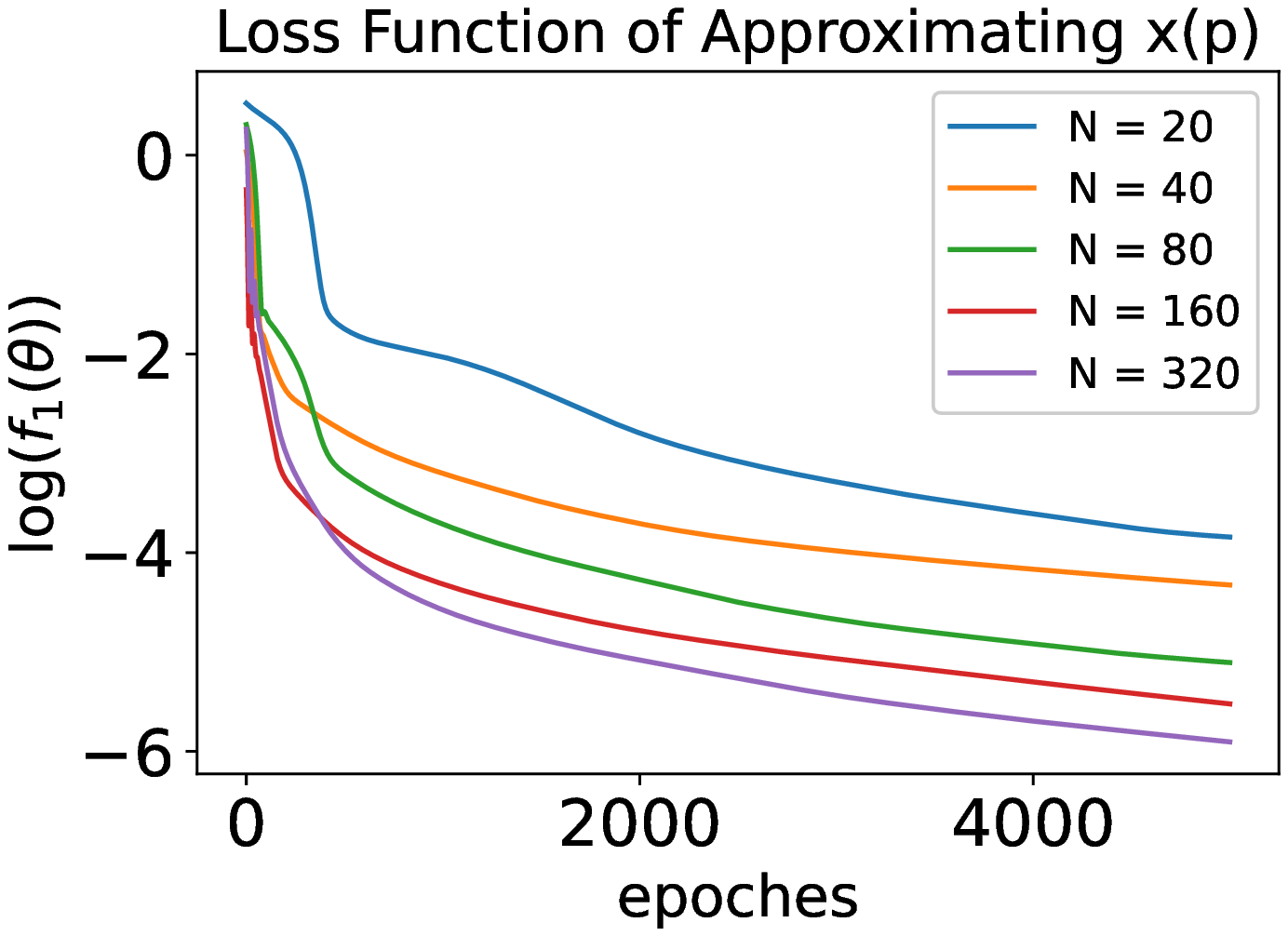}
	\includegraphics[width=0.45\linewidth]{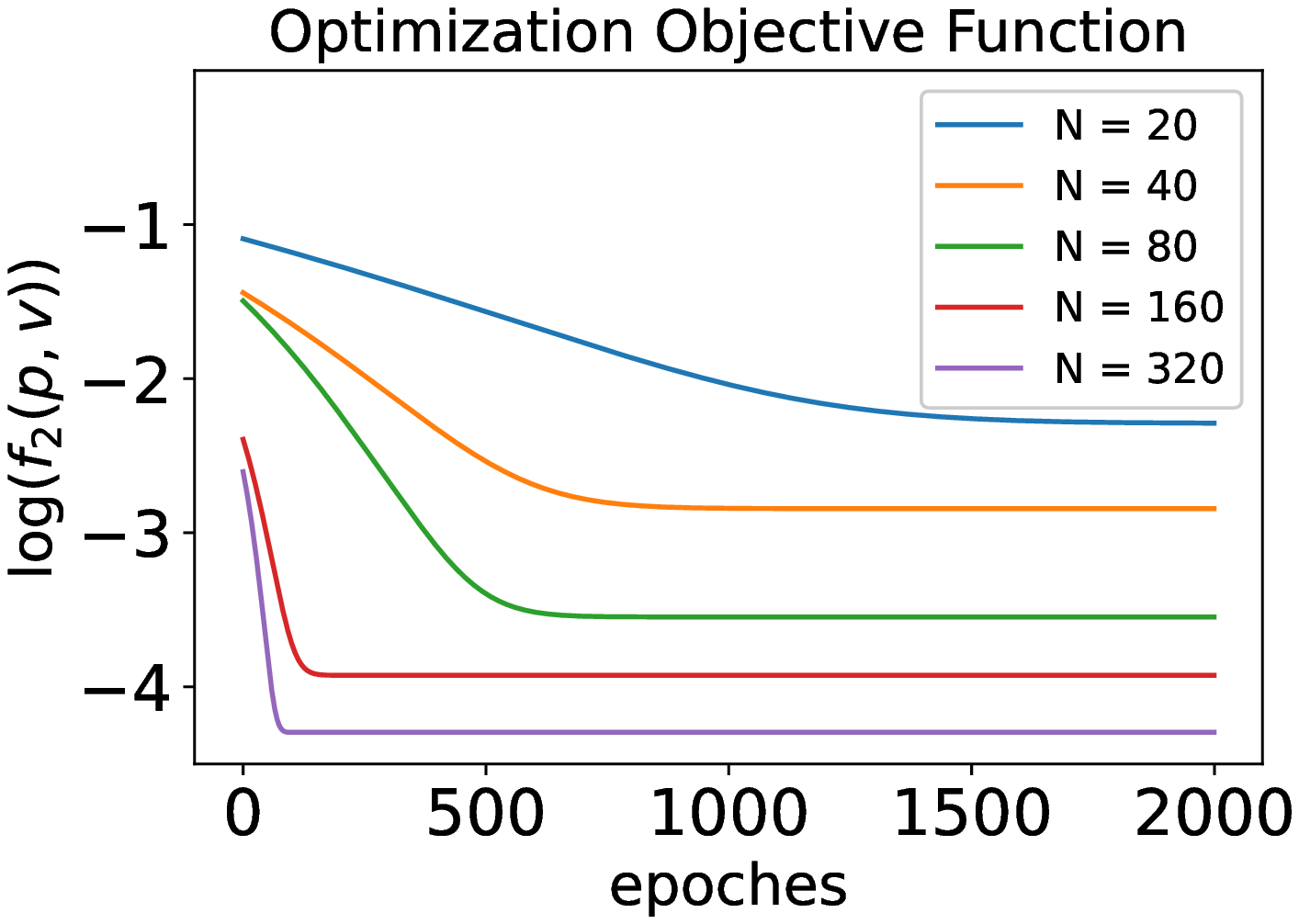}
	\caption{{\bf Upper:} $log(f_1(\theta))$ shown in (\ref{OPt}) v.s. epoches for training EDNNs to learn the solution path; {\bf Lower:} $log(f_2(\mathbf{p},\mathbf{v}))$ shown in (\ref{eqn:optimize-bifur}) v.s. epoches for computing the bifurcation point by EDNNs.}
	\label{fig:ex1loss1}
\end{figure}

\subsection{Two \& three dimensional examples}\label{sec:dnn-bifur-ex2}
We consider the following general polynomial equation
\begin{equation}
	F(x,\mathbf{p},n) = x^n+\mathbf{p}_{n-1}x^{n-1}+\cdots +  \mathbf{p}_{0}= 0.\label{poly}
\end{equation}
The
solution structure of the general polynomial equation (\ref{poly}) over the parameter space is an important question to explore. Normally, the discriminant has been used to answer this question by illustrating the boundary
between regions where the solution structure changes \cite{bernal2020machine}. The discriminant is a singularity/bifurcation point if other parameters are given. In this example, we explore the discriminant by using our algorithm for both two and three dimensional cases.

\subsubsection{Two dimensional case}
We write the quadratic polynomial as 
\begin{equation}
	F(x,b,c) = x^2+bx+c= 0.
	\label{eqn:n=2}
\end{equation}
Then the bifurcation appears at $c=\frac{b^2}{4}$. The training data is collected by $\{b_i,c_i,\frac{-b_i + \sqrt{b_i^2-4c_i}}{2}\}_{i=1}^{K = 25000}$. To generate the dataset, we first get 5000 random $b_i$ from $[-2,2]$ with a uniform distribution. For each $b_i$, $c_i$ is drawn from the uniform distribution on $[\frac{b_i^2}{8},\frac{b_i^2}{4}]$ for 5 times. The range for $c_i$ is chosen to guarantee the existence of real solutions. The bifurcation curves are plotted in Fig. \ref{fig:ex3_1} for different one-hidden-layer EDNNs and also compared with the analytical bifurcation curve, $c=\frac{b^2}{4}$.

\begin{figure}[th]
	\centering
	\includegraphics[width=0.45\linewidth]{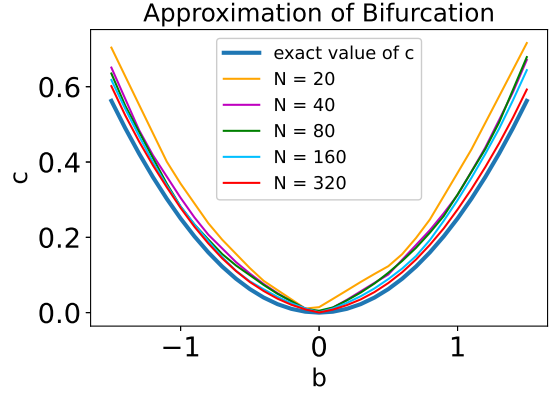}
	\caption{Approximation of the bifurcation curve for Eq. \eqref{eqn:n=2} with different one-hidden-layer EDNNs with $N$ nodes.}
	\label{fig:ex3_1}
\end{figure}

\begin{figure}[th]
	\centering
	\includegraphics[width=0.45\linewidth]{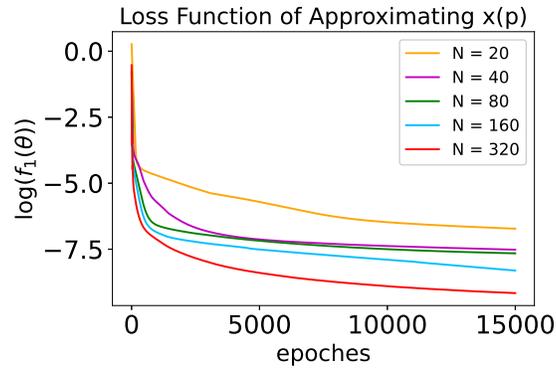}
	\caption{$log(f_1(\theta))$ shown in (\ref{OPt}) v.s. epoches for training EDNNs to learn the solution path for Eq. (\ref{eqn:n=2}).}
	\label{fig:ex3_2}
\end{figure}

\subsubsection{Three dimensional case}
The monic cubic polynomial has the following general form
\begin{equation}
	F(x,\mathbf{p}) = x^3+bx^2+cx + d= 0,
	\label{eqn:n=3}
\end{equation}
where $\mathbf{p}=(b,c,d)$ is the parameter vector.
By defining the discriminant  as 
\begin{equation}
	\Delta = (\frac{bc}{6} - \frac{b^3}{27} - \frac{d}{2})^2 + (\frac{c}{3} - \frac{b^2}{9})^3,
\end{equation}
we compute three roots of $F(x)$ as follows
\begin{equation}
	\begin{aligned}
		x_1 =& -\frac{b}{3} + \sqrt[3]{\frac{bc}{6} - \frac{b^3}{27} - \frac{d}{2} + \sqrt{\Delta} } + \sqrt[3]{\frac{bc}{6} - \frac{b^3}{27} - \frac{d}{2} - \sqrt{\Delta} }  \\
		x_2 =& -\frac{b}{3} + \frac{-1+\sqrt{3}i}{2}\sqrt[3]{\frac{bc}{6} - \frac{b^3}{27} - \frac{d}{2} + \sqrt{\Delta} } \\
		& + \frac{-1-\sqrt{3}i}{2}\sqrt[3]{\frac{bc}{6} - \frac{b^3}{27} - \frac{d}{2} - \sqrt{\Delta} }  \\
		x_3 =& -\frac{b}{3} + \frac{-1-\sqrt{3}i}{2}\sqrt[3]{\frac{bc}{6} - \frac{b^3}{27} - \frac{d}{2} + \sqrt{\Delta} } \\
		&+ \frac{-1+\sqrt{3}i}{2}\sqrt[3]{\frac{bc}{6} - \frac{b^3}{27} - \frac{d}{2} - \sqrt{\Delta} }.  \\
	\end{aligned}
\end{equation}

 When $\Delta = 0$, the equation has three real roots counting multiplicity.
	\begin{itemize}
		\item If $ (\frac{bc}{6} - \frac{b^3}{27} - \frac{d}{2})^2  = -(\frac{c}{3} - \frac{b^2}{9})^3\ne 0$, the equation has two distinct real roots of multiplicity 1 and 2, respectively.
		\item  If $ (\frac{bc}{6} - \frac{b^3}{27} - \frac{d}{2})^2  = -(\frac{c}{3} - \frac{b^2}{9})^3= 0$, the equation has one real root of multiplicity 3.
\end{itemize}

Then we have the bifurcation curve described by $(c,d) = (\frac{b^2}{3},\frac{b^3}{27})$.
The training data is collected by using $\{b_i,c_i,d_i, -\frac{b_i}{3} + 2\sqrt[3]{\frac{b_ic_i}{6} - \frac{b_i^3}{27} - \frac{d_i}{2}}\}_{i=1}^{K = 12000}$. To generate the training dataset, we first get 3000 random $b_i$ from the uniform distribution on $[0,2]$.  For each  $b_i$, $c_i$ is drawn from the uniform distribution on $[0,\frac{b_i^2}{3}]$ for 4 times. After obtaining pairs of $(b_i,c_i)$, we solve the equation $ (\frac{bc}{6} - \frac{b^3}{27} - \frac{d}{2})^2  = -(\frac{c}{3} - \frac{b^2}{9^2})^3= 0$ to get the corresponding value of $d_i$. We show the comparisons between the approximation of bifurcation curves by different one-hidden-layer EDNNs and the analytical one in Fig. \ref{fig:ex4_cd}. The numerical errors are computed 
\begin{equation}
	\int_{0}^{\frac{3}{2}}|c_N(b)-c(b)|db\hbox{~and~}\int_{0}^{\frac{3}{2}}|d_N(b)-d(b)|db \label{bc_err}
\end{equation}
and are shown in Table~\ref{table:ex4_cd} for different EDNNs to demonstrate the convergence with respect to $N$.

\begin{figure}[th]
	\centering
	\includegraphics[width=0.45\linewidth]{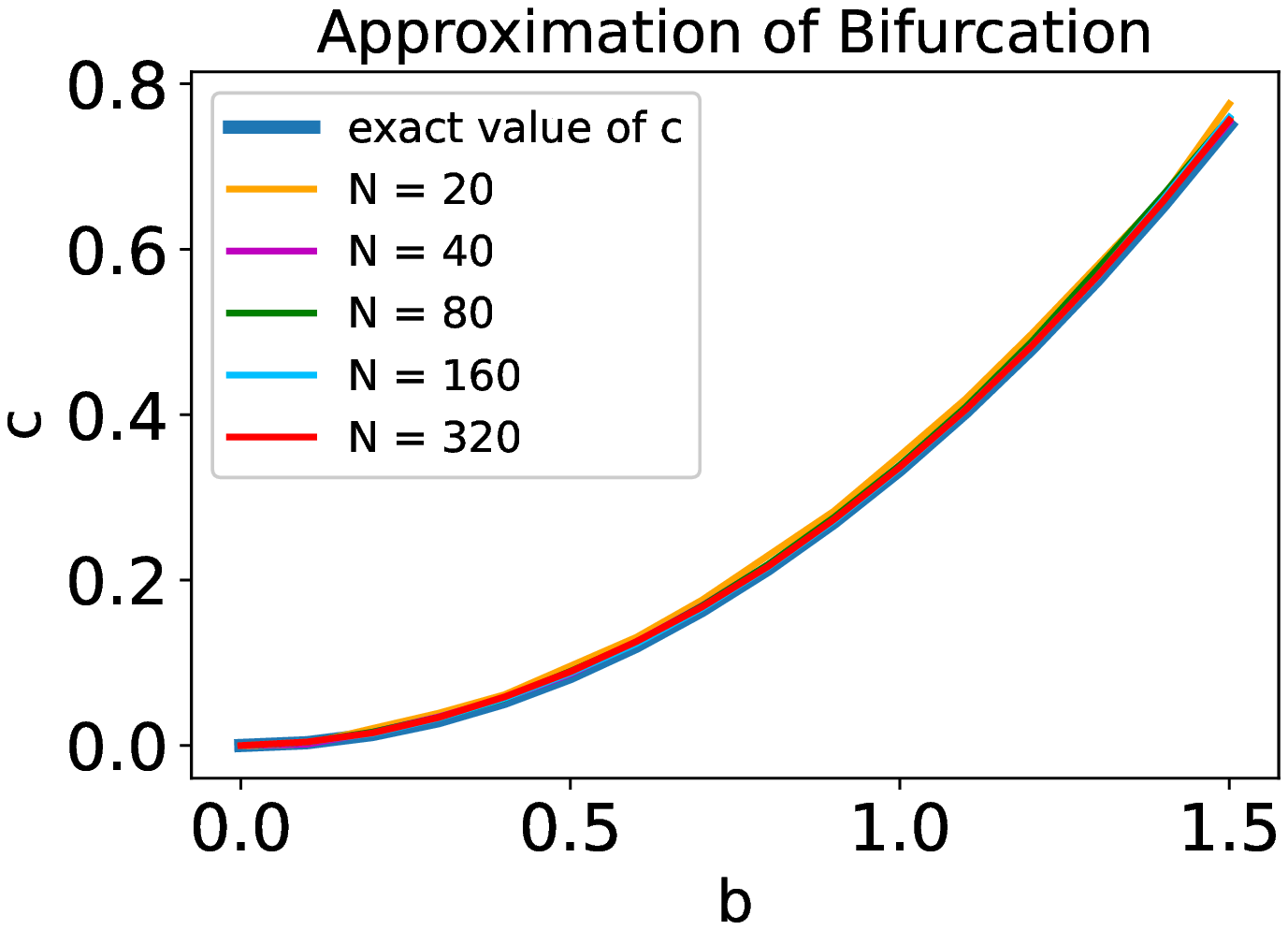}
	\includegraphics[width=0.45\linewidth]{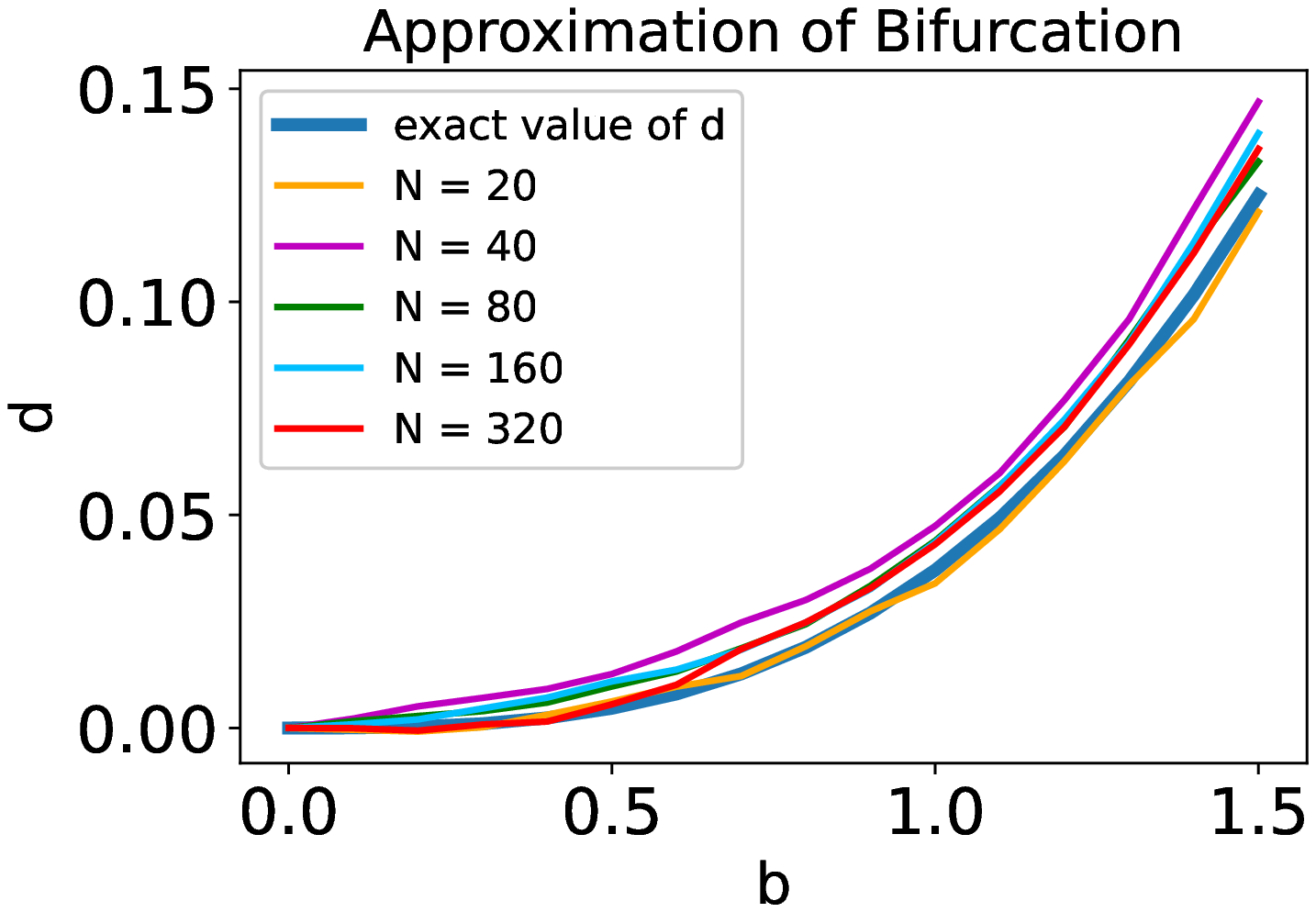}
	\caption{Comparison of the bifurcation curves obtained by different one-hidden-layer EDNNs with $N$ neurons and the exact bifurcation curves for both $c$ v.s. $b$ ({\bf Upper}) and $d$ v.s. $b$ ({\bf Lower}) where $b\in[0,3/2]$.}
	\label{fig:ex4_cd}
	
\end{figure}
\begin{figure}[th]
	\centering
	\includegraphics[width=0.45\linewidth]{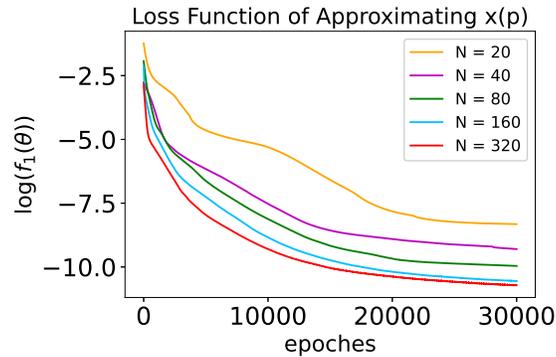}
	\caption{$log(f_1(\theta))$ shown in (\ref{OPt}) v.s. epoches for training EDNNs to learn the solution path.}
	\label{fig:ex4_loss}
\end{figure}

\begin{table}[t]
	{		\begin{center}
			\begin{tabular}{|l|c|c|} \hline
				N&Numerical error for $c$ &Numerical error for $d$\\\hline	
				20 & 0.0183& 0.0023 \\
				\hline				
				40 &  0.0060& 0.0151\\\hline	
				80 &  0.0087 &  0.0085\\\hline	
				160 &  0.0048 & 0.0091\\\hline	
				320 &   0.0055&  0.0066 \\
				\hline				
			\end{tabular}
	\end{center}}
	\caption{The numerical errors defined in Eq. (\ref{bc_err}) for different one-hidden-layer EDNNs with $N$ neurons.}
	\label{table:ex4_cd}
\end{table}

\subsection{Nonlinear parametric boundary value problem}
\label{sec:ex3}
We consider the following 1D nonlinear boundary value problem.
\begin{equation}\left\{
	\begin{aligned}
		&u_{xx} = u^2(u^2-p),\\
		&u_x(0) = 0,	u(1) = 0,
	\end{aligned}\right.\label{ex3}
\end{equation}
where $p$ is a parameter. There are multiple solutions for any given parameter $p$, and the solution structure gets complex when the parameter $p$ is increased\cite{hao2011domain}. As we track along solution paths with respect to $p$, turning points occur and introduces more solutions. In order to compute the bifurcation point $p^*$, we  use one-hidden-layer EDNNs with the sigmoid activation function and $N$ neurons. 
We discretize (\ref{ex3}) by using finite difference method with the stepsize $h=0.2$.  Since there are four solution branches shown in Fig~\ref{fig:ex3},  we use 7600 points on each solution branch to train five different EDNNs and choose the best one to compute the bifurcation points. The numerical errors are summarized in Table~\ref{table:ex3}.

\begin{table*}[ht]
	{		\begin{center}
			\resizebox{\textwidth}{!}
			{			\begin{tabular}{|l|c|c|c|c|} \hline
					$N$&
					Branch 1($\lambda=20$)& Branch 2($\lambda=100$) &Branch 3($\lambda=56$)&Branch 4($\lambda=0.3$)\\\hline
					10 &  0.1811 & 0.0163& 0.0272  & 0.0763\\\hline
					50 &0.0869&  0.0038&  0.0320 &  0.0242 \\\hline
					100  &0.0446&0.0041&  0.0003&   0.0421\\\hline
					200   & 0.0122&0.0218 &  0.0025 & 0.0349\\\hline
					500   & 0.0408 & 0.0195 &  0.0116 &   0.0287\\\hline											
			\end{tabular}}
	\end{center}}
	\caption{Numerical errors of bifurcation points, $|p_N^* - p^*|$,  for different one-hidden-layer EDNNs with $N$ neurons on different solution branches shown in Fig. \ref{fig:ex3} with different $\lambda$ used in Eq. (\ref{OPt}).}
	\label{table:ex3}
\end{table*}

\begin{figure}
	\centering
	\includegraphics[width=0.45\linewidth]{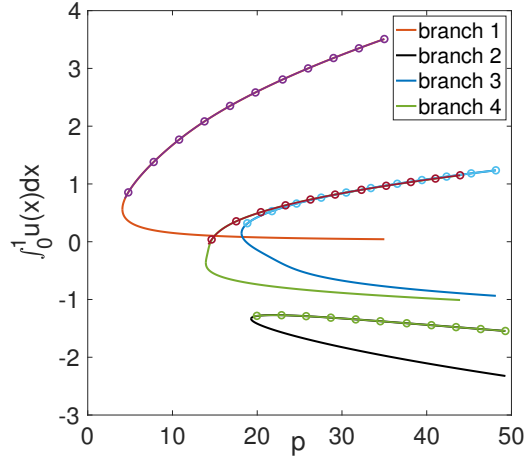}
	\caption{The solution structure v.s. $p$ for Eq. (\ref{ex3}). Here the solid curves represent solution paths computed by finite difference method, while the colored ones with circles correspond to the solution of EDNNs with hidden unit $N = 500$.}
	\label{fig:ex3}
\end{figure}

\subsection{Schnakenberg model}
\label{sec:ex4}
Last we consider the Schnakenberg model
which describes biological pattern
formation due to diffusion-driven instability \cite{hao2020spatial}:
\begin{equation}\left\{
	\begin{aligned}
		&\frac{\partial u}{\partial t} = \Delta u + \eta(a-u+u^2v),\\
		&\frac{\partial v}{\partial t} = d\Delta v + \eta(b-u^2v),
	\end{aligned}\right.\label{ex4}
\end{equation}
where $u$ is an activator and $v$ is a substrate. Here $d$ represents the relative dispersal rate of two species while the parameter $\eta$ specifies the relative balance between the dispersion and the chemical reaction where $v$ is converted to $u$ in a nonlinear way and $u$ decays linearly. The steady-state system of (\ref{ex4}) with the non-flux boundary condition has been well-studied in \cite{hao2020spatial} and shown multiple steady-state solutions and the bifurcation structure to the diffusion parameter $d$. 
	Although the supercritical pitchfork bifurcation is observed for parameter $d$, it is unclear for the bifurcation types for the whole parameter space by 
	treating  $a$, $b$, and $\eta$ as parameters. Thus we use EDNN to explore the bifurcations on the three dimensional parameter space. More specifically, we consider the discretized steady-state system on a 1D domain $x\in[0,1]$ with no-flux boundary conditions with the stepsize $h=0.1$.
We fix $a = 1/3$ and compute the bifurcation points of $d$ in terms of $b$ and $\eta$ via EDNN.

To train the one-hidden-layer EDNNs with the sigmoid activation function, we compute solutions for the system \eqref{ex4} with $a = 1/3$, $b_i =\frac{2}{3} +  \frac{i}{30}$ and $\eta_j = 45 + \frac{j}{2}$ for $i = 0,\cdots,4$ and $j = 0,\cdots,70$ with different values of $d$.  For each pair of $(b_i,\eta_j)$, we generate 300 training data points. To get the bifurcation approximation of $d$ for given $(a,b_i,\eta_j)$, we train 10 different EDNNs independently and compute the bifurcation point of $d$ for the best EDNN (with the smallest loss). The color map of bifurcation points of $d$ in term of $(b,\eta)$ is shown in Fig. \ref{Fig:Schnakenberg}. It shows that if the chemical reaction ($\eta$) is relatively small, the number of solutions get larger when the diffusion ($d$) is large.  But if the chemical reaction 
		is already large, the solution structure is complex even the diffusion is even small.

\begin{figure}
	\centering
	\includegraphics[width=0.45\linewidth]{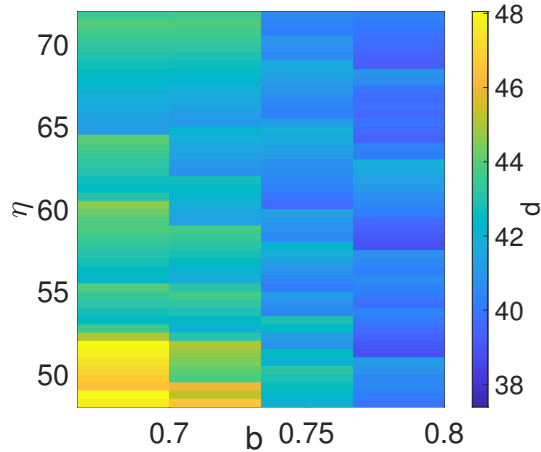}
	\caption{The bifurcation points of $d$ for given $(b,\eta)$ with $a=1/3$ for Schnakenberg model.}
	\label{Fig:Schnakenberg}
\end{figure}

\nocite{*}

\section{Conclusion}
In this paper, we develop a novel numerical method for computing bifurcation points of nonlinear parametric systems based on EDNNs.  This new approach is based on the correspondence between solutions of nonlinear systems and the parameter space and reveals this nonlinear correspondence by EDNNs which combines both solution data and the equation information. Then we compute the bifurcation points by using trained EDNNs to search the minimum eigenvalues on the parameter space. We provide both theoretical analysis and numerical examples for one-hidden-layer EDNNs.
Numerical results show that deep neural networks  are needed to accurately compute the bifurcation points. One of the future directions is to  utilize local information to refine the predictions of the bifurcation points via endgames, such as the power series endgame \cite{morgan1992power}. The idea is  to approximate the bifurcation point by using fractional power series expansion of the solution with respect to the bifurcation parameter.
		This has been successfully adapted to computing the
		location of critical points \cite{hauenstein2018semidefinite}.

Another future direction is to extend the current work to deep neural networks for both convergence analysis and training algorithms. More specifically, greedy training algorithm would be helpful to observe the convergence numerically \cite{hao2021efficient}.

Moreover, this proposed neural network approach can be extended to solve pattern formation problems in biology, physics, and engineering. These mathematical models of differential equations provide a rich source of computing multiple solutions of nonlinear models, for instance, multiple equilibria separated by saddle-node bifurcations in patchy ecosystems and electric power grids \cite{brummitt2015coupled,eppstein2012random}. 
		However, current numerical methods for computing multiple solutions have some limitations, for instance, it is hard to choose a good initial guess and the computations for 3D sometimes even 2D become very inefficient. The EDNN can provide an alternative tool by computing the bifurcations so that the multiple equilibria would be tracked separately which can reduce the computational cost and also provide a good initial guess. Another challenge in this field is how to compute accurate solutions of chaotic systems with incomplete information, e.g., noisy or incomplete solution information \cite{gelbrecht2021neural}. One scenario is the solution information could be incomplete near bifurcation points since the accuracy of Newton's solver is very low near bifurcation points. In the future, we will extend the training data with some perturbed noise to further test the robustness of EDNN.

\bibliographystyle{unsrt}  
\bibliography{dnn-bifur}  

\end{document}